\definecolor{verylight}{gray}{0.97}
\definecolor{light}{gray}{0.9}
\definecolor{medium}{gray}{0.85}
\def\frk{\mathfrak}               
\def\Phi{{\frk N}}
\def\opn#1#2{\def#1{\operatorname{#2}}} 
\opn\chara{char} \opn\length{\ell} \opn\pd{pd} \opn\rk{rk}
\opn\projdim{proj\,dim} \opn\injdim{inj\,dim} \opn\rank{rank}
\opn\depth{depth} \opn\grade{grade} \opn\height{height}
\opn\embdim{emb\,dim} \opn\codim{codim}
\opn\Tr{Tr} \opn\bigrank{big\,rank}
\opn\superheight{superheight}\opn\lcm{lcm}
\opn\trdeg{tr\,deg}
\opn\reg{reg} \opn\lreg{lreg} \opn\ini{in} \opn\lpd{lpd}
\opn\size{size}\opn{\mult}{mult}
\opn\div{div} \opn\Div{Div} \opn\cl{cl} \opn\Cl{Cl}
\opn\Spec{Spec} \opn\Supp{Supp} \opn\supp{supp} \opn\Sing{Sing}
\opn\Ass{Ass} \opn\Min{Min}
\opn\Ann{Ann} \opn\Rad{Rad} \opn\Soc{Soc}
\opn\Syz{Syz} \opn\Im{Im} \opn\Ker{Ker} \opn\Coker{Coker}
\opn\Am{Am} \opn\Hom{Hom} \opn\Tor{Tor} \opn\Ext{Ext}
\opn\End{End} \opn\Aut{Aut} \opn\id{id} \opn\ini{in}
\opn\nat{nat}
\opn\pff{pf}
\opn\Pf{Pf} \opn\GL{GL} \opn\SL{SL} \opn\mod{mod} \opn\ord{ord}
\opn\Gin{Gin}
\opn\Hilb{Hilb}\opn\adeg{adeg}\opn\std{std}\opn\ip{infpt}
\opn\Pol{Pol}
\opn\sat{sat}
\opn\Var{Var}
\opn\Gen{Gen}
\opn\indmatch{indmatch}
\opn\aff{aff} \opn\con{conv} \opn\relint{relint} \opn\st{st}
\opn\lk{lk} \opn\cn{cn} \opn\core{core} \opn\vol{vol}
\opn\link{link} \opn\star{star}
\opn\gr{gr}
\def\Cc{{\mathcal C}}
\def\pot#1#2{#1[\kern-0.28ex[#2]\kern-0.28ex]}
\opn\dirlim{\underrightarrow{\lim}}
\opn\inivlim{\underleftarrow{\lim}}
\def\Implies{\ifmmode\Longrightarrow \else
        \unskip${}\Longrightarrow{}$\ignorespaces\fi}
\def\implies{\ifmmode\Rightarrow \else
        \unskip${}\Rightarrow{}$\ignorespaces\fi}
\def\iff{\ifmmode\Longleftrightarrow \else
        \unskip${}\Longleftrightarrow{}$\ignorespaces\fi}
\newtheorem{Theorem}{Theorem}[section]
\newtheorem{Lemma}[Theorem]{Lemma}
\newtheorem{Definition}[Theorem]{Definition}
\newtheorem{Conjecture}[Theorem]{Conjecture}
\let\epsilon\varepsilon
\let\phi=\varphi
\let\kappa=\varkappa
\def\qed{\ifhmode\textqed\fi
      \ifmmode\ifinner\quad\qedsymbol\else\dispqed\fi\fi}
\def\textqed{\unskip\nobreak\penalty50
       \hskip2em\hbox{}\nobreak\hfil\qedsymbol
       \parfillskip=0pt \finalhyphendemerits=0}
\def\dispqed{\rlap{\qquad\qedsymbol}}
\opn\dis{dis}
\def\pnt{{\raise0.5mm\hbox{\large\bf.}}}
\opn\Lex{Lex}
\newcommand{\inD}[1][\relax]{\def\argone{#1}\def\temprelax{\relax}
  \ifx\argone\temprelax\right.\else\,\middle|#1\right.{}\fi}
\newif\ifbinary
\begin{document}

\title{ On the binomial edge ideals of block graphs}

\author{Faryal Chaudhry, Ahmet Dokuyucu, Rida Irfan}

\address{Abdus Salam School of Mathematical Sciences, GC University,
Lahore. 68-B, New Muslim Town, Lahore 54600, Pakistan} \email{chaudhryfaryal@gmail.com}

\address{Faculty of Mathematics and Computer Science, Ovidius University
Bd. Mamaia 124, 900527 Constanta \\
and Lumina-The University of South-East Europe
Sos. Colentina nr. 64b, Bucharest,
Romania}
\email{ahmet.dokuyucu@lumina.org}

\address{Abdus Salam School of Mathematical Sciences, GC University,
Lahore. 68-B, New Muslim Town, Lahore 54600, Pakistan} \email{ridairfan\_88@yahoo.com}

\thanks{The  first and the third author  were supported by the Higher Education Commission of Pakistan and the Abdus Salam School of Mathematical Sciences, Lahore, Pakistan.}

\begin{abstract}
We find a class of block graphs whose binomial edge ideals have minimal regularity. As a consequence,  we characterize the trees whose binomial edge ideals have minimal regularity. Also, we show that the binomial edge ideal of a block graph has the same depth as its initial ideal.
\end{abstract}
\subjclass[2010]{13D02, 05E40}

\keywords{Binomial edge ideals,  regularity, depth}

\maketitle

\section{Introduction}

In this paper we study homological properties of some classes of binomial edge ideals.

Let $G$ be a simple graph on the vertex set $[n]$ and let $S = K[x_1,\ldots,x_n,y_1,\ldots,y_n]$ be the polynomial ring in $2n$ variables over a field $K$. For $  1 \leq {i}<{j} \leq n$, we set $f_{ij} = x_{i}y_{j}-x_{j}y_{i}$. The binomial edge ideal of $G$ is defined as $J_{G} = (f_{ij}: \{i,j\} \in E(G)).$
Binomial edge ideals were introduced in \cite{HHHKR} and  \cite{Oh}. Algebraic and homological properties of binomial edge ideals have been studied in several papers.
In \cite{EHH}, it was conjectured that $J_{G}$ and $\ini_{<}(J_{G})$ have the same extremal Betti numbers. Here $<$ denotes the lexicographic order in $S$ induced by $x_1>x_2> \cdots >x_n>y_1>y_2>\cdots>y_n$.  This conjecture was proved in \cite{AD} for cycles and complete bipartite graphs.
In \cite{EZ}, it was shown that, for a closed graph $G$, $J_{G}$ and $\ini_{<}(J_{G})$ have the same regularity which can be expressed in the combinatorial data of the graph.
We recall that a graph $G$ is closed if and only if it has a quadratic Gr\"obner basis with respect to the lexicographic order.

In support of the conjecture given in \cite{EHH}, we show, in Section~\ref{block}, that if $G$ is a block graph, then $\depth(S/J_{G}) = \depth(S/\ini_{<}(J_{G}))$; see Theorem~\ref{2.2}. By a block graph we mean a chordal graph $G$ with the property that any two maximal cliques of $G$ intersect in at most one vertex.

Also, in the same section, we show a similar equality for regularity. More precisely, in Theorem~\ref{clgraph} we show that $\reg(S/J_G)=\reg(S/\ini_< (J_G))=\ell$ if $G$ a $C_{\ell}$-graph. $C_{\ell}$-graphs constitute a subclass of the block graphs; see Section~\ref{block} for definition and Figure~\ref{example} for an example.

In \cite{M} it was shown that, for any connected graph $G$ on the vertex set $[n]$, we have
\[\ell \leq \reg(S/J_{G}) \leq n-1,\] where $\ell$ is the length of the longest induced path of $G$.

The main motivation of our work was to answer the following question.
May we characterize the connected graphs $G$ whose longest induced path has length $\ell$ and $\reg(S/J_G)=\ell ?$
We succeeded to answer this question for trees. In Theorem~\ref{3.1}, we show that if $T$ is a tree whose longest induced path has length $\ell$, then $\reg(S/J_{T})=\ell$ if and only if $T$ is caterpillar. 
A caterpillar tree is a tree $T$ with the property that it contains a path $P$ such that any vertex of $T$ is either a vertex of $P$ or it is adjacent to a vertex of $P$.

In \cite{Oh}, the so-called weakly closed graphs were introduced. This is a class of graphs which includes closed graphs. In the same paper, it was shown that a tree is caterpillar if and only if it is a weakly closed graph.
Having in mind our Theorem~\ref{3.1} and Theorem 3.2 in \cite{EZ} which states that $\reg(S/J_{G})=\ell$ if $G$ is a connected closed graph whose longest induced path has length $\ell$, and by some computer experiments, we are tempted to formulate the following.

\begin{Conjecture}
If $G$ is a connected weakly closed graph whose longest induced path has length $\ell$, then $\reg(S/J_{G})=\ell$.
\end{Conjecture}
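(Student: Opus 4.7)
The plan is to establish the upper bound $\reg(S/J_G) \leq \ell$, since the reverse inequality $\reg(S/J_G) \geq \ell$ is already furnished by the result of \cite{M}. I would proceed by induction on $n=|V(G)|$, the base cases ($n\leq 2$) being trivial or covered by the closed graph case in \cite{EZ}.

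Fix a weakly closed labeling $v_1,v_2,\dots,v_n$ of $G$. By the structural properties of this ordering (analogous to the "simplicial ordering" exploited in the caterpillar case of Theorem~\ref{3.1}), the graph $G'=G\setminus v_n$ is again weakly closed with respect to the inherited labeling, and the length $\ell'$ of its longest induced path satisfies $\ell-1\leq\ell'\leq\ell$. Letting $N=N_G(v_n)=\{w_1,\dots,w_r\}$ listed in the weakly closed order, I would build $J_G$ from $J_{G'}$ by adjoining the generators $f_{v_n,w_k}$ one at a time: setting $G_0=G'$ and $G_k=G_{k-1}\cup\{\{v_n,w_k\}\}$, we have at each step the short exact sequence
\[
0\longrightarrow \frac{S}{J_{G_{k-1}}:f_{v_n,w_k}}(-2)\longrightarrow \frac{S}{J_{G_{k-1}}}\longrightarrow \frac{S}{J_{G_k}}\longrightarrow 0.
\]
Applying the standard regularity inequality reduces the task to controlling $\reg\bigl(S/(J_{G_{k-1}}:f_{v_n,w_k})\bigr)$ for each $k$, together with the outer term $\reg(S/J_{G'})$ which is at most $\ell'\leq \ell$ by induction.

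The main obstacle, and the step where the weakly closed hypothesis must earn its keep, is identifying the colon ideals. The expected behavior, modeled on the analysis carried out in Section~\ref{block} for block graphs and the subsequent tree theorem, is that each colon $J_{G_{k-1}}:f_{v_n,w_k}$ decomposes as $J_{H_k}+L_k$, where $H_k$ is a weakly closed graph on fewer vertices, obtained by contracting or identifying certain vertices along $N(v_n)\cup\{v_n\}$, and $L_k$ is a linear ideal in fresh variables. The weakly closed property should be precisely what is needed to guarantee this decomposition, since it controls how induced paths through $v_n$ interact with the ordering, ensuring that new generators produced in the colon come from edges of an auxiliary weakly closed graph rather than from arbitrary path-binomials.

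The delicate part, where genuinely new combinatorial input is needed, is to bound the longest induced path of each auxiliary graph $H_k$ by $\ell$ (or $\ell-1$, shifted appropriately by the degree $2$ twist in the exact sequence). This is the verification that could conceivably fail and is the reason the statement is posed as a conjecture; one would want to show that any induced path of length greater than $\ell$ in $H_k$ would lift to an induced path of length greater than $\ell$ in $G$, using the weakly closed labeling to control which edges can survive the contraction. Once this combinatorial lemma is in place, the induction hypothesis applied to each $H_k$ finishes the proof, in direct parallel to the treatment of caterpillars in Theorem~\ref{3.1} and of closed graphs in \cite{EZ}.
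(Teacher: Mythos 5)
The statement you are addressing is posed in the paper as a \emph{Conjecture}, not a theorem: the authors give no proof of it, only the supporting evidence of Theorem~\ref{clgraph} (the $\Cc_\ell$-graph case), Theorem~\ref{3.1} (the caterpillar characterization for trees), the closed-graph case of \cite{EZ}, and computer experiments. There is therefore no argument in the paper to compare yours against, and the only question is whether your proposal actually closes the conjecture. It does not.

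Two load-bearing steps in your outline are asserted rather than proved, and you flag both yourself. First, the claim that each colon ideal $J_{G_{k-1}}:f_{v_n,w_k}$ has the form $J_{H_k}+L_k$ with $H_k$ a weakly closed graph and $L_k$ generated by variables is labelled ``expected behavior''; in general such colon ideals of binomial edge ideals by edge binomials involve higher-degree path binomials, and nothing in the definition of weak closedness is shown to force the clean structure you need. Note also that the intermediate graphs $G_k$ are proper subgraphs of $G$ on the same vertex set, so they need not be weakly closed and their longest induced paths can exceed $\ell$; your induction never applies to them directly, but you must still control the colon at \emph{every} step $j$, not only the last. Second, the bound on the longest induced path of each auxiliary graph $H_k$ --- precisely the input the inductive hypothesis requires --- is the step you yourself describe as the one that ``could conceivably fail and is the reason the statement is posed as a conjecture.'' With both of these lemmas unverified, what you have is a reasonable research plan for attacking the conjecture, not a proof of it; the statement remains open.
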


\section{Preliminaries}
\label{preliminaries}
In this section we introduce the notation used in this paper and summarize a few results on  binomial edge ideals.

Let $G$ be a simple graph on the vertex set $[n] = \{1,\ldots,n\} $, that is, $G$ has no loops and no multiple edges. Furthermore, let $K$ be a field and $S = K[x_1,\ldots,x_n,y_1,\ldots,y_n]$ be the polynomial ring in $2n$ variables. For $  1 \leq {i}<{j} \leq n$, we set $f_{ij} = x_{i}y_{j}-x_{j}y_{i}$. The \textit{binomial edge ideal} $J_{G}$ $\subset S$ associated with ${G}$ is generated by all the quadratic binomials $f_{ij}= x_{i}y_{j}-x_{j}y_{i}$ such that $\left\{i,j\right\} \in E(G)$. Binomial edge ideals were introduced in the papers \cite{HHHKR} and \cite{Oh}.

We first recall some basic definitions from graph theory. A vertex $i$ of $G$ whose deletion from the graph gives a graph with more connected components than $G$ is called a {\em cut point} of G. A {\em chordal} graph is a graph without cycles of length greater than or equal to $4.$ A {\em clique} of a graph $G$ is a complete subgraph of $G.$ The cliques of a  graph $G$ form a simplicial complex, $\Delta(G),$ which is called the {\em clique complex } of $G.$ Its facets are the maximal cliques of $G.$ A graph $G$ is a {\em block graph} if and only if it is  chordal and every two maximal cliques have at most one vertex in common. This class was considered in \cite[Theorem 1.1]{EHH}.

The clique complex $\Delta(G)$ of a chordal graph $G$ has the property that there exists a {\em leaf order} on its facets.  This means that the facets of $\Delta(G)$ may be ordered as $F_1,\ldots, F_r$ such that, for every $i>1,$ $F_i$ is a leaf of 
the simplicial complex generated by $F_1,\ldots,F_i$. A {\em leaf} $F$ of a simplicial complex $\Delta$ is a facet of $\Delta$ with the property that there exists another facet of $\Delta$, say $G$, such that, for every facet $H\neq F$ of $\Delta$, $H\cap F\subseteq G\cap F.$

\subsection{Gr\"obner bases of binomial edge ideals}
Let $<$ be the lexicographic order on $S$ induced by the natural order of the variables. As it was shown in \cite{HHHKR}, the Gr\"obner basis of $J_{G}$ with respect to this order may be given in terms of the admissible paths of $G$.

\begin{Definition}\cite{HHHKR}{\em \
Let $i<j$ be two vertices of $G$. A path $i=i_{0},i_{1},\ldots,i_{r-1},i_{r}=j$ from $i$ to $j$ is called  {\em admissible} if the following conditions are fulfilled:
\begin{enumerate}
  \item $i_{k}\neq i_{l}$ for $k\neq l$;
  \item for each $k=1,\ldots,r-1$ on has either $i_{k}<i$ or $i_{k}>j$;
  \item for any proper subset $\{j_{1},\ldots,j_{s}\}$ of $\{i_{1},\ldots,i_{r-1}\}$, the sequence $i,j_{1},\ldots,j_{s},j$ is not a path in $G$.
\end{enumerate}
Given an admissible path $\pi$ in $G$ from $i$ to $j$, we set $u_{\pi}= (\prod_{i_{k}>j} x_{i_{k}})(\prod_{i_{l}<i} y_{i_{l}})$.}
\end{Definition}

\begin{Theorem}\cite{HHHKR}\label{*}
The set of binomials
\[ \Gamma=\bigcup_{i<j}\{u_{\pi} f_{ij}~:~\pi~is ~an~ admissible~path~from~i~to~j\}\]
is the reduced Gr\"obner basis of $J_{G}$ with respect to the lexicographic order on $S$ induced by $x_{1}>\cdots>x_{n}>y_{1}>\cdots>y_{n}$.

\end{Theorem}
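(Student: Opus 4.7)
My strategy is the standard four-step verification of a reduced Gr\"obner basis: inclusion in $J_G$, identification of leading terms, Buchberger's criterion on $S$-polynomials, and reducedness.

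\emph{Inclusion.} I would show $u_\pi f_{ij}\in J_G$ for every admissible path $\pi:i=i_0,i_1,\dots,i_r=j$ by induction on $r$. For $r=1$, $u_\pi=1$ and $f_{ij}$ is already a generator. For $r\geq 2$ the key tools are the classical Pl\"ucker-type identities
\[
x_af_{bc}-x_bf_{ac}+x_cf_{ab}=0,\qquad y_af_{bc}-y_bf_{ac}+y_cf_{ab}=0,
\]
valid for any three distinct indices $a,b,c$. Telescoping them along $\pi$ expresses $u_\pi f_{ij}$ as a polynomial combination of the $f_{i_ki_{k+1}}$ associated with consecutive edges of $\pi$, which lie in $J_G$ by definition.

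\emph{Leading terms.} Since $i<j$ and the lex order has $x_1>\cdots>x_n>y_1>\cdots>y_n$, the leading monomial of $f_{ij}$ is $x_iy_j$, so $\ini_<(u_\pi f_{ij})=u_\pi x_iy_j$. By the definition of $u_\pi$ this is a squarefree monomial whose support is $\{x_{i_k}:i_k>j\}\cup\{y_{i_l}:i_l<i\}\cup\{x_i,y_j\}$.

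\emph{Buchberger's criterion.} Given $g=u_\pi f_{ij}$ and $g'=u_{\pi'}f_{i'j'}$, the $S$-polynomial reduces automatically when the leading monomials are coprime. In the overlapping cases I would apply the two Pl\"ucker relations displayed above to rewrite $S(g,g')$, and then match its summands, monomial by monomial, with elements of $\Gamma$ coming from admissible paths obtained by splicing, concatenating, or truncating $\pi$ and $\pi'$. This combinatorial matching is the main obstacle: for each overlap pattern one must exhibit a concrete admissible path that absorbs the remainder. The admissibility axiom~(3)---which forbids proper subsequences of interior vertices from forming an alternative path---is precisely what guarantees that the spliced walks one produces are genuinely admissible and that the reduction chains terminate.

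\emph{Reducedness.} All leading coefficients are $\pm 1$. To see that no non-leading term of $u_\pi f_{ij}$ is divisible by a leading monomial $u_{\pi'}x_{i'}y_{j'}$ of a different element of $\Gamma$, I would appeal once more to admissibility~(3): any such divisibility would force a proper shorter admissible subpath inside $\pi$, contradicting the minimality encoded in the definition. Combining the four steps yields that $\Gamma$ is the reduced Gr\"obner basis of $J_G$ with respect to $<$.
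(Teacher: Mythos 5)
This statement is imported verbatim from \cite{HHHKR}; the paper you are reading gives no proof of it, so there is no internal argument to compare your attempt against. Judged on its own terms, your outline does follow the architecture of the original proof in \cite{HHHKR}: the inclusion $u_\pi f_{ij}\in J_G$ via telescoping the Pl\"ucker relations along the path, the identification $\ini_<(u_\pi f_{ij})=u_\pi x_i y_j$ (correct, since $i<j$ forces $x_iy_j>x_jy_i$ in this lex order, and admissibility condition (2) makes the product squarefree), and a Buchberger-type verification. Those first two steps are sound as sketched.

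The genuine gap is the third step, and you have named it yourself: ``this combinatorial matching is the main obstacle.'' The entire content of the theorem lives in that case analysis. Showing that each $S$-pair reduces to zero modulo $\Gamma$ requires, for every overlap pattern of two admissible paths, producing an explicit spliced or truncated walk, verifying that it is again \emph{admissible} (conditions (1)--(3) are not preserved by naive concatenation --- the spliced walk may revisit vertices or admit a proper shortcut, so one must pass to a minimal subpath and check that the resulting monomial still divides what it needs to divide), and checking that the resulting representation is a \emph{standard} one, i.e.\ that no summand has leading term exceeding that of the $S$-polynomial --- merely expressing $S(g,g')$ as a combination of elements of $\Gamma$ does not suffice for Buchberger's criterion. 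None of this is carried out, so what you have is a correct plan rather than a proof. The reducedness step is also thinner than you suggest: besides showing that no trailing term $u_\pi x_jy_i$ is divisible by another leading monomial, you must show minimality of the set of leading terms (no $u_{\pi'}x_{i'}y_{j'}$ divides a distinct $u_\pi x_iy_j$), which is exactly what admissibility condition (3) is designed for but still needs an explicit argument.
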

\subsection{Primary decomposition of binomial edge ideals}
Theorem~\ref{*} shows, in particular, that $\ini_{<}(J_{G})$ is a radical monomial ideal which implies that the binomial edge ideal $J_{G}$ is radical as well. Hence $J_{G}$ is equal to the intersection of all its minimal primes. These minimal prime ideals were determined in \cite{HHHKR}.

Let $\mathcal{S}\subset [n]$ be a (possible empty) subset of $[n]$ and let $G_{1},\ldots,G_{c(\mathcal{S})}$ be the connected components of $G_{[n]\setminus \mathcal{S}}$, where $G_{[n]\setminus \mathcal{S}}$ denotes the restriction of $G$ to the vertex set $[n]\setminus \mathcal{S}$. For $1\leq i \leq c(\mathcal{S})$, let $\widetilde{G_{i}}$ be the complete graph on the vertex set of $G_{i}$. Let

\[
P_{\mathcal{S}}(G)=(\bigcup_{{i}\in \mathcal{S}} \{x_{i},y_{i}\},J_{\tilde{G_{1}}},\ldots,J_{\tilde{G}_{c(\mathcal{S})}}).
\]

\begin{Theorem}\cite{HHHKR}
\[J_{G}=\bigcap_{{\mathcal{S}\subset[n]}}P_{\mathcal{S}}(G)
\]

 In particular, the minimal primes of $J_{G}$ are among $P_{\mathcal{S}}(G)$, where $\mathcal{S} \subset [n]$.
\end{Theorem}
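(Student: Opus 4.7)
The plan is to prove the theorem in three stages. First, one checks that each $P_{\mathcal{S}}(G)$ is a prime ideal containing $J_{G}$. Second, one uses Theorem~\ref{*} to conclude that $J_{G}$ is a radical ideal, so $J_{G}$ equals the intersection of its minimal primes. Third, one shows that every minimal prime of $J_{G}$ is of the form $P_{\mathcal{S}}(G)$ for a suitable $\mathcal{S}\subset[n]$. Taken together, these three steps give both statements of the theorem: the minimal primes are among the $P_{\mathcal{S}}(G)$, and $\bigcap_{\mathcal{S}}P_{\mathcal{S}}(G)=J_{G}$ because any non-minimal member of the family already contains a minimal member and so can be discarded from the intersection.

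For the containment $J_{G}\subseteq P_{\mathcal{S}}(G)$, fix an edge $\{i,j\}\in E(G)$: if either endpoint lies in $\mathcal{S}$, then $f_{ij}$ already lies in the variable part of $P_{\mathcal{S}}(G)$; otherwise both endpoints belong to the same connected component $G_{k}$ of $G_{[n]\setminus\mathcal{S}}$, so $f_{ij}\in J_{\widetilde{G_{k}}}$. For the primality of $P_{\mathcal{S}}(G)$, one observes that $S/P_{\mathcal{S}}(G)$ is, after killing the variables $x_{i},y_{i}$ with $i\in\mathcal{S}$, a tensor product over $K$ of the quotients by the individual ideals $J_{\widetilde{G_{k}}}$, which are supported on disjoint sets of variables; each factor is the coordinate ring of a generic $2\times|V(G_{k})|$ determinantal variety, and is therefore a domain by the classical theory of determinantal ideals. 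Radicality of $J_{G}$ then follows immediately from Theorem~\ref{*}, since the leading monomials of the reduced Gr\"obner basis $\Gamma$ are squarefree.

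The main obstacle is the third step, the identification of minimal primes. Given a minimal prime $P\supseteq J_{G}$, I would set $\mathcal{S}(P):=\{i\in[n]: x_{i}\in P \text{ and } y_{i}\in P\}$ and aim to show $P=P_{\mathcal{S}(P)}(G)$. The key tool is the Pl\"ucker-type identity
\[
x_{k}f_{ij}=x_{i}f_{kj}-x_{j}f_{ki},\qquad y_{k}f_{ij}=y_{i}f_{kj}-y_{j}f_{ki},
\]
valid for any three indices $i,j,k$. It implies that whenever $f_{ki},f_{kj}\in P$, one has $(x_{k},y_{k})\cdot f_{ij}\subseteq P$, and primality of $P$ then forces either $f_{ij}\in P$ or $k\in\mathcal{S}(P)$. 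Given two vertices $u,v$ in the same connected component $G_{k}$ of $G_{[n]\setminus\mathcal{S}(P)}$, I would walk along a path $u=w_{0},w_{1},\ldots,w_{r}=v$ inside $G_{k}$; since none of the intermediate $w_{i}$ lies in $\mathcal{S}(P)$, an induction on $r$ using the dichotomy above gives $f_{uv}\in P$. This establishes $J_{\widetilde{G_{k}}}\subseteq P$ for every component $G_{k}$, hence $P_{\mathcal{S}(P)}(G)\subseteq P$; since $P_{\mathcal{S}(P)}(G)$ is itself a prime containing $J_{G}$, minimality of $P$ forces equality, completing the proof.
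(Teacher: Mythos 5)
The paper states this theorem without proof, citing \cite{HHHKR}, so there is no internal argument to compare against; your proposal essentially reconstructs the original proof from that reference (primality of each $P_{\mathcal{S}}(G)$, radicality of $J_G$ via the squarefree initial ideal of Theorem~\ref{*}, and identification of the minimal primes). The argument is correct: the Pl\"ucker-type identities check out, the induction along paths in $G_{[n]\setminus\mathcal{S}(P)}$ is sound, and the only step deserving an explicit word of justification is that the tensor product of the determinantal rings $S_k/J_{\widetilde{G_k}}$ over an arbitrary field $K$ is again a domain --- which holds because ideals of $2$-minors of a generic matrix remain prime over any domain, so each factor is geometrically integral.
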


\begin{Theorem}\cite{HHHKR}
Let G be a connected graph on the vertex set [n], and $\mathcal{S} \subset [n]$. Then $P_{\mathcal{S}}(G$) is a minimal prime ideal of $J_{G}$ if and only if $\mathcal{S} = \emptyset $ or $\mathcal{S} \neq \emptyset$ and for each ${i} \in \mathcal{S}$ one has $c(\mathcal{S} \setminus \{i\}) < c(\mathcal{S})$.
\end{Theorem}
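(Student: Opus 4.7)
The plan is to reduce the minimality question to a combinatorial comparison among the primes $P_{\mathcal{S}}(G)$. Since the preceding theorem gives $J_{G}=\bigcap_{\mathcal{S}\subset[n]}P_{\mathcal{S}}(G)$, the minimal primes of $J_{G}$ are exactly the minimal elements of this family under inclusion. The crucial first step is a \emph{containment criterion}: $P_{\mathcal{T}}(G)\subset P_{\mathcal{S}}(G)$ if and only if $\mathcal{T}\subset\mathcal{S}$ and, for every connected component $H$ of $G_{[n]\setminus\mathcal{T}}$, the set $V(H)\setminus\mathcal{S}$ lies in a single connected component of $G_{[n]\setminus\mathcal{S}}$. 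The verification is generator-by-generator. The linear generators $x_{i},y_{i}$ of $P_{\mathcal{T}}(G)$ can only be absorbed by the linear part of $P_{\mathcal{S}}(G)$, which is the $K$-span of $\{x_{j},y_{j}:j\in\mathcal{S}\}$ (each determinantal ideal $J_{\tilde{G_{j}}}$ is prime and contains no linear forms); this forces $\mathcal{T}\subset\mathcal{S}$. For a quadratic generator $f_{ab}\in J_{\tilde{H}}$ with $a,b\notin\mathcal{S}$, membership in $P_{\mathcal{S}}(G)$ reduces, modulo $\{x_{j},y_{j}:j\in\mathcal{S}\}$, to $f_{ab}\in\sum_{j}J_{\tilde{G_{j}}}$, which holds precisely when $a$ and $b$ lie in a common component of $G_{[n]\setminus\mathcal{S}}$.

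The case $\mathcal{S}=\emptyset$ is immediate: $P_{\emptyset}(G)=J_{K_{n}}$ contains no linear forms, while every $P_{\mathcal{T}}(G)$ with $\mathcal{T}\neq\emptyset$ contains some $x_{i}$; so no such $P_{\mathcal{T}}$ is contained in $P_{\emptyset}(G)$, and $P_{\emptyset}(G)$ is automatically minimal. For $\mathcal{S}\neq\emptyset$, the ``only if'' direction is by contraposition. Suppose $c(\mathcal{S}\setminus\{i\})\geq c(\mathcal{S})$ for some $i\in\mathcal{S}$; then $i$ has neighbors in at most one component of $G_{[n]\setminus\mathcal{S}}$, and the $G_{[n]\setminus(\mathcal{S}\setminus\{i\})}$-component containing $i$ meets at most one $G_{[n]\setminus\mathcal{S}}$-component. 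The criterion yields $P_{\mathcal{S}\setminus\{i\}}(G)\subset P_{\mathcal{S}}(G)$, and the height formula $\height P_{\mathcal{S}}(G)=n+|\mathcal{S}|-c(\mathcal{S})$ shows the inclusion is strict, so $P_{\mathcal{S}}(G)$ is not minimal.

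For the ``if'' direction, assume the combinatorial hypothesis and suppose, for contradiction, that $P_{\mathcal{T}}(G)\subsetneq P_{\mathcal{S}}(G)$ for some $\mathcal{T}$. The criterion gives $\mathcal{T}\subsetneq\mathcal{S}$, so we pick $i\in\mathcal{S}\setminus\mathcal{T}$. By hypothesis, $i$ has neighbors $a_{1},a_{2}$ in two distinct components of $G_{[n]\setminus\mathcal{S}}$. Since $a_{1},i,a_{2}\in[n]\setminus\mathcal{T}$ and $\{a_{1},i\},\{a_{2},i\}\in E(G)$, the vertices $a_{1},a_{2}$ lie in a common component of $G_{[n]\setminus\mathcal{T}}$; the criterion then forces them into a single component of $G_{[n]\setminus\mathcal{S}}$, contradicting the choice of $a_{1},a_{2}$. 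I expect the main difficulty to be the precise formulation and proof of the containment criterion; once it is in hand, both implications follow by short height bookkeeping and the single-vertex argument above.
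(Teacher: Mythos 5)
This statement is quoted in the paper's preliminaries from \cite{HHHKR} and is given no proof there, so there is nothing internal to compare against; judged on its own, your argument is correct and is essentially the same route as the original proof in \cite{HHHKR} (a containment criterion for the primes $P_{\mathcal{T}}(G)\subseteq P_{\mathcal{S}}(G)$, followed by the observation that $c(\mathcal{S}\setminus\{i\})<c(\mathcal{S})$ exactly when $i$ has neighbours in at least two components of $G_{[n]\setminus\mathcal{S}}$). The only point worth writing out in full is the ``precisely when'' in your criterion, namely that $f_{ab}\notin\sum_j J_{\tilde{G_j}}$ when $a,b\notin\mathcal{S}$ lie in different components; this follows, for instance, from the $\ZZ^n$-grading with $\deg x_i=\deg y_i=e_i$, under which the degree $e_a+e_b$ component of that ideal vanishes unless some generator $f_{cd}$ has $\{c,d\}=\{a,b\}$.
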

A set $\mathcal{S}$ which satisfies the condition in the above theorem is called a \textit{cut-point} set of $G$.

\section{Initial ideals of binomial edge ideals of block graphs}
\label{block}

In this section, we first show that, for a block graph $G$ on $[n]$ with $c$ connected components, we have $\depth(S/J_{G})=\depth(S/\ini_{<}(J_{G}))=n+c$, where $<$ denotes the lexicographic order induced by $x_{1}>\cdots>x_{n}>y_{1}>\cdots>y_{n}$ in the ring $S=K[x_{1},\ldots,x_{n},y_{1},\ldots,y_{n}]$.

We begin with the following lemma.

\begin{Lemma}\label{2.1}
Let $G$ be a graph on the vertex set $[n]$ and let $i\in[n]$. Then $$\ini_{<}(J_{G},x_{i},y_{i})=(\ini_{<}(J_{G}),x_{i},y_{i}).$$
\end{Lemma}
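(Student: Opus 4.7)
The inclusion $\ini_<(J_G,x_i,y_i)\supseteq (\ini_<(J_G),x_i,y_i)$ is immediate, since $\ini_<(J_G)\subseteq \ini_<(J_G,x_i,y_i)$ and $x_i,y_i$ are already monomials lying in $(J_G,x_i,y_i)$. The content of the lemma is therefore the reverse inclusion.

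My plan is to prove this by exhibiting a Gr\"obner basis of $(J_G,x_i,y_i)$ whose initial terms generate $(\ini_<(J_G),x_i,y_i)$. The natural candidate is $\Gamma\cup\{x_i,y_i\}$, where $\Gamma$ is the reduced Gr\"obner basis of $J_G$ given by Theorem \ref{*}. By Buchberger's criterion it suffices to show that every $S$-pair among the elements of $\Gamma\cup\{x_i,y_i\}$ reduces to zero. The $S$-pairs internal to $\Gamma$ already do, since $\Gamma$ is a Gr\"obner basis of $J_G$, and the $S$-pair $S(x_i,y_i)$ is trivially zero because the initial terms are coprime (Buchberger's first criterion).

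The remaining S-pairs to verify are $S(x_i,u_\pi f_{jk})$ and $S(y_i,u_\pi f_{jk})$ for each admissible path $\pi$ from $j$ to $k$, where $j<k$ and the initial term is $u_\pi x_j y_k$. I will split into cases according to the role of $i$ in $\pi$. If $i\notin V(\pi)$, then neither $x_i$ nor $y_i$ divides $u_\pi x_j y_k$, so both $S$-pairs vanish by Buchberger's first criterion. If $i$ is an interior vertex of $\pi$, then by the definition of an admissible path either $i<j$ (so $y_i\mid u_\pi$, $x_i\nmid u_\pi x_j y_k$) or $i>k$ (so $x_i\mid u_\pi$, $y_i\nmid u_\pi x_j y_k$); in each situation one $S$-pair is trivial by coprimality, while the other is directly computed and seen to be a monomial multiple of $x_i$ or $y_i$, hence reduces to zero modulo $\{x_i,y_i\}$. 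Finally, if $i\in\{j,k\}$, a short calculation using $f_{ik}=x_iy_k-x_ky_i$ (respectively $f_{ji}=x_jy_i-x_iy_j$) yields $S(x_i,u_\pi f_{ik})=u_\pi x_k y_i$ and $S(y_i,u_\pi f_{ji})=u_\pi x_i y_j$, both of which are divisible by $y_i$ or $x_i$ and so again reduce to zero.

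Having verified Buchberger's criterion in all cases, $\Gamma\cup\{x_i,y_i\}$ is a Gr\"obner basis of $(J_G,x_i,y_i)$, and therefore
\[
\ini_<(J_G,x_i,y_i)=\bigl(\ini_<(g):g\in\Gamma\bigr)+(x_i,y_i)=(\ini_<(J_G),x_i,y_i),
\]
completing the proof. The only real obstacle is bookkeeping: making sure that the location of $i$ among the interior vertices of $\pi$ correctly determines which of $x_i,y_i$ (if either) divides $u_\pi$, which is what controls whether an $S$-pair is trivial by coprimality or must be computed explicitly.
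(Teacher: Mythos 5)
Your proof is correct, but it takes a genuinely different route from the paper's. You verify Buchberger's criterion directly for $\Gamma\cup\{x_i,y_i\}$, thereby producing an explicit Gr\"obner basis of $(J_G,x_i,y_i)$; your case analysis of the $S$-pairs is accurate (coprime leading terms when $i\notin V(\pi)$ or when the relevant variable fails to divide $u_\pi x_jy_k$, and otherwise the $S$-pair equals the single monomial $u_\pi x_ky_j$, which is divisible by $x_i$ or $y_i$ and hence reduces to zero). The paper instead passes to the deleted graph: it uses that $(J_G,x_i,y_i)=(J_{G\setminus\{i\}},x_i,y_i)$, whose initial ideal is $(\ini_<(J_{G\setminus\{i\}}),x_i,y_i)$, and then shows this equals $(\ini_<(J_G),x_i,y_i)$ by observing that any minimal generator $u_\pi x_ky_l$ of $\ini_<(J_G)$ not divisible by $x_i$ or $y_i$ arises from an admissible path avoiding $i$ and so already lies in $\ini_<(J_{G\setminus\{i\}})$. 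Both arguments rest on the same combinatorial fact from Theorem~\ref{*} --- a vertex $i$ occurring in an admissible path forces $x_i$ or $y_i$ to divide the corresponding initial monomial --- but yours buys the slightly stronger conclusion that $\Gamma\cup\{x_i,y_i\}$ is a Gr\"obner basis of $(J_G,x_i,y_i)$ at the cost of the $S$-pair bookkeeping, while the paper's is shorter because it only compares monomial generating sets.
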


\begin{proof}
We have $\ini_{<}(J_{G},x_{i},y_{i})=\ini_{<}(J_{G\setminus \{i\}},x_{i},y_{i})=(\ini_{<}(J_{G\setminus \{i\}}),x_{i},y_{i})$. Therefore, we have to show that $(\ini_{<}(J_{G}),x_{i},y_{i})=(\ini_{<}(J_{G\setminus \{i\}}),x_{i},y_{i})$. The inclusion $\supseteq$ is obvious since $J_{G\setminus \{i\}}\subset J_{G}$. For the other inclusion, let us take $u$ to be a minimal generator of $\ini_{<}(J_{G})$. If $x_{i}\mid u$ or $y_{i}\mid u$, obviously $u\in (\ini(J_{G\setminus\{i\}}),x_{i},y_{i})$. Let now $x_{i}\nmid u$ and $y_{i}\nmid u$. This means that $u=u_{\pi}x_{k}y_{l}$ for some admissible path $\pi$ from $k$ to $l$ which does not contain the vertex $i$. Then it follows that $\pi$ is a path from $k$ to $l$ in $G\setminus \{i\}$, hence $u\in \ini_{<}(J_{G\setminus \{i\}})$.
\end{proof}

\begin{Theorem}\label{2.2}
Let $G$ be a block graph. Then $\depth(S/J_{G})=\depth(S/\ini_{<}(J_{G}))=n+c$, where $c$ is the number of connected component of $G$.
\end{Theorem}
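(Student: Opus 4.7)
The plan is an induction on $n$ that combines three ingredients: an upper bound on $\depth(S/J_G)$ coming from associated primes, the general inequality $\depth(S/\ini_<(J_G)) \leq \depth(S/J_G)$ (from the comparison of graded Betti numbers $\beta_{ij}(S/J_G)\leq \beta_{ij}(S/\ini_<(J_G))$), and an inductive lower bound on $\depth(S/\ini_<(J_G))$ built on Lemma~\ref{2.1}. Together these collapse to the desired chain of equalities.

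For the upper bound, since $J_G$ is radical we have $\Ass(S/J_G)$ equal to the set of minimal primes $\{P_\mathcal{S}(G):\mathcal{S}\text{ a cut-point set}\}$, and each $S/P_\mathcal{S}(G)$ is a tensor product of polynomial rings and generic $2\times 2$ determinantal rings, hence Cohen--Macaulay of dimension $n-|\mathcal{S}|+c(\mathcal{S})$. Thus $\depth(S/J_G)\leq n-|\mathcal{S}|+c(\mathcal{S})$ for every cut-point set $\mathcal{S}$. For block graphs, a combinatorial argument on the block-cut tree shows $c(\mathcal{S})\geq c+|\mathcal{S}|$ for every cut-point set (every vertex of $\mathcal{S}$ must at some stage act as a cut vertex, and in a block graph each cut vertex lies in at least two maximal cliques, so its removal separates at least one new component). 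The minimum of $n-|\mathcal{S}|+c(\mathcal{S})$ over cut-point sets is therefore $n+c$, attained at $\mathcal{S}=\emptyset$, and so $\depth(S/J_G)\leq n+c$.

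For the lower bound on $\depth(S/\ini_<(J_G))$, I would first reduce to the connected case via the tensor-product decomposition across connected components (depth is additive when the variable sets are disjoint), so the target becomes $n+1$. The base case $G=K_n$ is standard: the Gr\"obner basis recipe yields $\ini_<(J_{K_n})=(x_iy_j: i<j)$, which is the Stanley--Reisner ideal of a pure shellable simplicial complex, hence Cohen--Macaulay of depth $n+1$. For the inductive step with $G$ a connected block graph having at least two maximal cliques, I would pick a leaf block $B$ with unique cut vertex $v$, choose a simplicial vertex $u\in B\setminus\{v\}$, and use Lemma~\ref{2.1} to obtain
$$
(\ini_<(J_G),x_u,y_u) = (\ini_<(J_{G\setminus u}),x_u,y_u).
$$
Then $S/(\ini_<(J_G),x_u,y_u)\cong S'/\ini_<(J_{G\setminus u})$, with $S'=K[x_i,y_i: i\neq u]$, and the inductive hypothesis gives its depth as $n$. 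Running the short exact sequences
$$
0\longrightarrow S/(I:x_u)\xrightarrow{\,\cdot x_u\,} S/I \longrightarrow S/(I,x_u)\longrightarrow 0
$$
together with the $y_u$-analogue --- applied to $I=\ini_<(J_G)$ and in parallel to $I=J_G$ --- and invoking the Depth Lemma against induction on the relevant colon ideals, one aims at $\depth(S/\ini_<(J_G))\geq n+1$. Combining this with $\depth(S/\ini_<(J_G))\leq\depth(S/J_G)$ and the upper bound $\depth(S/J_G)\leq n+c$ then forces all three quantities to coincide at $n+c$.

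The principal obstacle is the colon-ideal step: identifying $(\ini_<(J_G):x_u)$ (and $(\ini_<(J_G):y_u)$) explicitly enough to apply the inductive hypothesis, and handling the analogous computation for $J_G$ itself. Because $u$ is simplicial and lies only in the leaf block $B$, admissible paths through $u$ are highly constrained, and one expects these colon ideals to be presentable as the initial ideal of a binomial edge ideal of a smaller block graph plus a controlled list of additional squarefree monomial generators --- a presentation to which Lemma~\ref{2.1} can again be applied. Making these identifications precise, and extracting the correct depth estimates from them by the inductive hypothesis while carrying the parallel story through for the binomial ideal, is the essential technical step of the argument.
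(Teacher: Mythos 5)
Your reductions to the connected case, your upper bound $\depth(S/J_G)\leq n+c$ via the minimal prime $P_{\emptyset}(G)$ (for which the combinatorial claim $c(\mathcal{S})\geq c+|\mathcal{S}|$ is not even needed), and the inequality $\depth(S/\ini_<(J_G))\leq\depth(S/J_G)$ are all sound, and they do correctly reduce the theorem to the single lower bound $\depth(S/\ini_<(J_G))\geq n+1$ for $G$ connected. But that lower bound is exactly where your argument stops: you propose to run the colon-ideal exact sequences $0\to S/(I:x_u)\to S/I\to S/(I,x_u)\to 0$ at a simplicial non-cut vertex $u$, and you acknowledge yourself that identifying $(\ini_<(J_G):x_u)$ and $(\ini_<(J_G):y_u)$ is an unresolved ``essential technical step.'' This is a genuine gap, not a routine verification. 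Lemma~\ref{2.1} controls $(\ini_<(J_G),x_u,y_u)$, but it says nothing about the colon ideals, whose generators involve the admissible paths passing \emph{through} $u$ as an interior vertex and are not visibly the initial ideal of a smaller block graph's binomial edge ideal. Moreover the depth bookkeeping does not close: from the Depth Lemma you would need $\depth(S/(I,x_u))\geq n+1$ as well as $\depth(S/(I:x_u))\geq n+1$, yet the only quantity your induction actually controls is $\depth(S/(I,x_u,y_u))=n$, which is one too small to propagate back up through the two exact sequences without extra information about the colon ideals.

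The paper's proof avoids this obstruction by working at the \emph{cut} vertex $i$ of a leaf clique rather than at a simplicial vertex, and by inducting on the number of maximal cliques rather than on $n$. The key ideal-theoretic input is the decomposition $J_G=J_1\cap J_2$ with $J_1=J_{G'}$ and $J_2=(x_i,y_i)+J_{G''}$ (from the proof of \cite[Theorem 1.1]{EHH}), which is transferred to initial ideals using Conca's criterion: $\ini_<(J_1\cap J_2)=\ini_<(J_1)\cap\ini_<(J_2)$ holds if and only if $\ini_<(J_1+J_2)=\ini_<(J_1)+\ini_<(J_2)$, and the latter is exactly what Lemma~\ref{2.1} delivers, since $J_1+J_2=J_{G'}+(x_i,y_i)$. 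The resulting Mayer--Vietoris sequence has middle and right terms whose depths are computable by the inductive hypothesis (they are again block graphs), and the Depth Lemma finishes. If you want to salvage your plan, the realistic fix is to replace the colon-ideal sequences by this intersection decomposition; as written, the core of the theorem remains unproved.
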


\begin{proof}
Let $G_{1},\ldots,G_{c}$ be the connected components of $G$ and $S_{i}=K[\{x_{j},y_{j}\}_{j\in G_{i}}]$. Then $S/J_{G}\cong S_{1}/J_{G_{1}}\otimes\cdots\otimes S_{c}/J_{G_{c}}$, so that $\depth S/J_{G}=\depth S_{1}/J_{G_{1}}+\cdots+\depth S_{c}/J_{G_{c}}$. Moreover, we have  $S/\ini_{<}(J_{G})\cong S/\ini_{<}(J_{G_{1}})\otimes\cdots\otimes S/\ini_{<}(J_{G_{c}})$, thus $\depth S/\ini_{<}(J_{G})\cong\depth S_{1}/\ini_{<}(J_{G_{1}})+\cdots+\depth S_{c}/\ini_{<}(J_{G_{c}})$.
Therefore, without loss of generality, we may assume that $G$ is connected. By \cite[Theorem 1.1]{EHH} we know that $\depth(S
/J_{G})=n+1$. In order to show that $\depth(S/\ini_{<}(J_{G}))=n+1$, we proceed by induction on the number of maximal cliques
of $G$. Let $\Delta(G)$ be the clique complex of $G$ and let $F_{1},\ldots,F_{r}$ be a leaf order on the facets of $\Delta(G)
$. If $r=1$, then $G$ is a simplex and the statement is well known. Let $r>1$; since $F_{r}$ is a leaf, there exists a
unique vertex, say $i\in F_{r}$, such that $F_{r}\cap F_{j}=\{i\} $ where $F_{j}$ is a branch of $F_{r}$. Let $F_{t_{1}},
\ldots,F_{t_{q}}$ be the facets of $\Delta(G)$ which intersect the leaf $F_{r}$ in the vertex $\{i\}$. Following the proof
of \cite[Theorem 1.1]{EHH} we may write $J_{G}=J_{1}\cap J_{2}$ where $J_{1}=\bigcap_{i\notin \mathcal{S}} P_{\mathcal{S}}(G)
$ and $J_{2}= \bigcap_{i\in \mathcal{S}} P_{\mathcal{S}}(G)$. Then, as it was shown in the proof of \cite[Theorem 1.1]{EHH},
it follows that $J_{1}=J_{G'}$ where $G'$ is obtained from $G$ by replacing the cliques $F_{t_{1}},\ldots,F_{t_{q}}$ and $F_{
r}$ by the clique on the vertex set $F_{r}\cup(\bigcup_{j=1}^q F_{t_{j}})$. Also, $J_{2}=(x_{i},y_{i})+J_{G''}$ where $
G''$ is the restriction  of $G$ to the vertex set $[n]\setminus \{i\}$.

We have $\ini_{<}(J_{G})=\ini_{<}(J_{1}\cap J_{2})$. By \cite[Lemma 1.3]{Concoa}, we have $\ini_{<}(J_{1}\cap J_{2})=\ini_{<}
(J_{1})\cap\ini_{<}(J_{2})$ if and only if $\ini_{<}(J_{1}+J_{2})=\ini_{<}(J_{1})+\ini_{<}(J_{2})$. But  $\ini_{<}(J_{1}+J_{2
})=$$\ini_{<}(J_{G'}+(x_{i},y_{i})+J_{G''})=$$\ini_{<}(J_{G'}+(x_{i},y_{i}))$. Hence, by Lemma ~\ref{2.1}, we get $\ini_{<}(J
_{1}+J_{2})=$$\ini_{<}(J_{G'})+(x_{i},y_{i})=\ini_{<}(J_{1})+\ini_{<}(J_{2})$. Therefore, we get $\ini_{<}(J_{G})=\ini_{<}(J_
{1})\cap\ini_{<}( J_{2})$ and, consequently, we have the following exact sequence of $S$-modules
$$ 0 \longrightarrow \frac{S}{\ini_{<}(J_{G})} \longrightarrow \frac{S}{\ini_{<}(J_{1})}\oplus\frac{S}{\ini_{<}(J_{2})}
\longrightarrow \frac{S}{\ini_{<}(J_{1}+ J_{2})}\longrightarrow 0.$$
By using again Lemma~\ref{2.1}, we have $\ini_{<}(J_{2})=\ini_{<}((x_{i},y_{i}),J_{G''})=(x_{i},y_{i})+\ini_{<}(J_{G''}).$
 Thus, we have actually the following exact sequence
\begin{equation}\label{1}
    0 \longrightarrow \frac{S}{\ini_{<}(J_{G})} \longrightarrow \frac{S}{\ini_{<}(J_{G'})}\oplus\frac{S}{(x_{i},y_{i})+\ini_{
		<}(J_{G''})}\longrightarrow \frac{S}{(x_{i},y_{i})+ \ini_{<}(J_{G'})}\longrightarrow 0.
\end{equation}

Since $G'$ inherits the properties of $G$ and has a smaller number of maximal cliques than $G$, it follows, by the inductive
hypothesis, that $\depth(S/J_{G'})=\depth(S/\ini_{<}(J_{G'}))=n+1$. Let $S_{i}$ be the polynomial ring $S/(x_{i},y_{i})$.
Then $S/((x_{i},y_{i})+\ini_{<}(J_{G''}))\cong S_{i}/\ini_{<}(J_{G''})$. Since $G''$ is a graph on $n-1$ vertices with $q+1$
connected components and satisfies our conditions, the inductive hypothesis implies that $\depth S/((x_{i},y_{i})+\ini_{<}(J_
{G''}))=n+q\geq n+1$. Hence,
\[\depth (S/\ini_{<}(J_{G'})\oplus S/((x_{i},y_{i})+\ini_{<}(J_{G''})))=n+1.\]
Next, we observe that $S/((x_{i},y_{i})+\ini_{<}(J_{G'}))\cong S_{i}/\ini_{<}(J_{H})$, where $H$ is obtained from $G'$ by
replacing the clique on the vertex set $F_{r}\cup(\bigcup_{j=1}^q F_{t_{j}})$ by the clique on the vertex set $F_{r}\cup(
\bigcup_{j=1}^q F_{t_{j}})\setminus \{i\}$. Hence, by the inductive hypothesis, $\depth(S/((x_{i},y_{i})+\ini_{<}(J_{G'})))=n
$ since $H$ is connected and its vertex set has cardinality $n-1$. Hence, by applying the Depth lemma to our exact sequence$~
(\ref{1})$, we get $\depth S/J_{G}=\depth S/\ini_{<}(J_{G})=n+1$.
\end{proof}

Let $G$ be a connected graph on the vertex set $[n]$ which consists of
\begin{itemize}
	\item [(i)] a sequence of maximal cliques $F_1,\ldots,F_\ell$ with $\dim F_i\geq 1$ for all $i$ such that $|F_i\cap F_{i+1}|=1$ for $1\leq i\leq \ell-1$ and $F_i\cap F_j=\emptyset$ for any
	$i<j$ such that $j\neq i+1,$ together with
	\item [(ii)] some additional edges of the form $F=\{j,k\}$ where $j$  is an intersection point of two consecutive cliques $F_i,F_{i+1}$ for some $1\leq i\leq \ell-1$, and $k$ is a vertex of degree $1.$
\end{itemize}

In other words, $G$ is obtained from a graph $H$ with $\Delta(H)=\langle F_{1},\ldots,F_{l}\rangle$ whose binomial edge ideal is Cohen-Macaulay (see \cite[Theorem 3.1]{EHH}) by attaching edges in the intersection points of the facets of $\Delta(H).$
Therefore, $G$ looks like the graph displayed in Figure~\ref{example}.

\begin{figure}[hbt]
\begin{center}
\psset{unit=1cm}
\begin{pspicture}(1,-2)(8,3)
\pspolygon(0,0)(1,-1)(2,0)(1,1)
\psline(2,0)(4,0)
\psline(2,0)(3,1)
\pspolygon(4,0)(5,-1)(6,0)(5,1)
\pspolygon(6,0)(7.5,0)(7.5,1)(6,1)
\pspolygon(7.5,0)(8.5,-1)(8.5,1)

\psline(2,0)(1,-1)
\psline(2,0)(2,-1)
\psline(4,0)(4,1)
\psline(6,0)(6,-1)
\psline(6,0)(6.8,-1)
\psline(0,0)(2,0)
\psline(1,1)(1,-1)
\psline(4,0)(6,0)
\psline(5,1)(5,-1)
\psline(6,0)(7.5,1)
\psline(6,1)(7.5,0)
\psline(2,0)(3,-1)
\rput(0,0){$\bullet$}
\rput(1,1){$\bullet$}
\rput(1,-1){$\bullet$}
\rput(2,0){$\bullet$}
\rput(2,-1){$\bullet$}
\rput(3,-1){$\bullet$}
\rput(4,1){$\bullet$}
\rput(3,1){$\bullet$}
\rput(4,0){$\bullet$}
\rput(5,-1){$\bullet$}
\rput(6,0){$\bullet$}
\rput(5,1){$\bullet$}
\rput(6,-1){$\bullet$}
\rput(6.8,-1){$\bullet$}
\rput(6,1){$\bullet$}
\rput(7.5,0){$\bullet$}
\rput(7.5,1){$\bullet$}
\rput(8.5,1){$\bullet$}
\rput(8.5,-1){$\bullet$}

\end{pspicture}
\end{center}
\caption{$C_{\ell}$-graph}
\label{example}
\end{figure}
Such a graph  has, obviously, the property that its longest induced path has length equal to $\ell.$ If a connected graph $G$ satisfies the above conditions (i) and (ii), we say that $G$ is  a $\Cc_\ell$-graph. In the case that $\dim F_i=1$ for  $1\leq i\leq \ell$, then $G$ is called a {\em caterpillar graph}.

We should also note that any $\Cc_\ell$--graph is  chordal  and has the property that any two distinct maximal cliques intersect in at most one vertex. So that any $C_{\ell}$-graph is a connected block graph.

\begin{Theorem}\label{clgraph}
Let $G$ be a $\Cc_\ell$-graph on the vertex set $[n].$ Then
 \[\reg(S/J_G)=\reg(S/\ini_< (J_G))=\ell. \]
\end{Theorem}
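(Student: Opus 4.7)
My first step would be to observe that the lower bound $\reg(S/\ini_<(J_G))\geq \reg(S/J_G)\geq \ell$ comes for free from \cite{M} and the general fact that passing to initial ideals does not decrease regularity. So once $\reg(S/\ini_<(J_G))\leq \ell$ is established, the sandwich $\ell\leq \reg(S/J_G)\leq \reg(S/\ini_<(J_G))=\ell$ gives both equalities, and the task reduces to proving this upper bound.

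I would do this by induction on $\ell$. The base case $\ell=1$ is that $G$ is a single clique $K_n$, where $\reg(S/\ini_<(J_{K_n}))=1$ is standard. For the inductive step I would reuse the short exact sequence built in the proof of Theorem~\ref{2.2}, taking $F_r=F_\ell$ (the last clique in the clique path of $G$) and $i=F_{\ell-1}\cap F_\ell$ as the unique intersection point of $F_\ell$ with its branches. From
$$0\to S/\ini_<(J_G)\to S/\ini_<(J_{G'})\oplus S/((x_i,y_i)+\ini_<(J_{G''}))\to S/((x_i,y_i)+\ini_<(J_{G'}))\to 0$$
and the standard regularity estimate for short exact sequences, it suffices to show that both summands of the middle term have regularity at most $\ell$ and the rightmost term has regularity at most $\ell-1$.

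The structural observations I would verify are as follows. First, the graph $G'$, obtained by merging $F_{\ell-1}$, $F_\ell$ and the pendant edges at $i$ into a single clique $\tilde F$, is a $\Cc_{\ell-1}$-graph, so by induction $\reg(S/\ini_<(J_{G'}))=\ell-1$. Second, by Lemma~\ref{2.1} the rightmost term's regularity equals that of $S_i/\ini_<(J_{G'\setminus\{i\}})$; since deleting $i$ from $\tilde F$ leaves a smaller clique with the same cut-vertex structure, $G'\setminus\{i\}$ is again a $\Cc_{\ell-1}$-graph, giving regularity $\ell-1$ by induction. Third, $G\setminus\{i\}$ decomposes as a disjoint union of the clique $F_\ell\setminus\{i\}$, the isolated pendant vertices attached at $i$, and a main subgraph supported on $F_1,\ldots,F_{\ell-2}$ together with $F_{\ell-1}\setminus\{i\}$. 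A case split on whether $|F_{\ell-1}|=2$ or $|F_{\ell-1}|\geq 3$ identifies the main subgraph as a $\Cc_{\ell-2}$- or $\Cc_{\ell-1}$-graph respectively, and additivity of regularity under tensor products bounds this middle summand by $\ell$ in all cases.

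The main obstacle is this last case analysis, especially the tightest situation in which both $|F_{\ell-1}|\geq 3$ and $|F_\ell|\geq 3$: there the main component contributes $\ell-1$ and $F_\ell\setminus\{i\}$ contributes $1$, so the bound $\ell$ is saturated with no room to spare. One must also confirm that the merging and deletion operations preserve the $\Cc$-graph structure so that the inductive hypothesis applies to $G'$ and $G'\setminus\{i\}$; this is a routine check that ultimately rests on the fact that $|\tilde F|\geq 3$ always holds, since $\tilde F$ contains the two distinct cut vertices of $F_{\ell-1}$ plus at least one further vertex of $F_\ell$.
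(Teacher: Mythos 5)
Your proposal is correct and follows essentially the same route as the paper: the same short exact sequence from the proof of Theorem~\ref{2.2} applied to the leaf $F_\ell$ with $i=F_{\ell-1}\cap F_\ell$, the same identification of $G'$, of the components of $G\setminus\{i\}$, and of $G'\setminus\{i\}$ as $\Cc$-graphs of smaller parameter, and the same combination of the Matsuda--Murai lower bound with $\reg(S/J_G)\leq\reg(S/\ini_<(J_G))$. The only difference is organizational: you induct on $\ell$ with a single clique as base case, whereas the paper inducts on the number of maximal cliques and settles the whisker-free case $r=\ell$ by citing the result of Ene and Zarojanu on closed Cohen--Macaulay graphs, so your version is marginally more self-contained but otherwise identical.
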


\begin{proof}
Let $G$ consists of the sequence of maximal cliques $F_{1},\ldots,F_{\ell}$ as in condition (i) to which we add  some edges as in condition (ii). So the maximal cliques of $G$ are $F_{1},\ldots,F_{\ell}$ and all the additional whiskers. We proceed by induction on the number $r$ of maximal cliques of $G$. If $r=\ell$, then $G$ is a closed graph whose binomial edge ideal is Cohen-Macaulay, hence the statement holds by \cite[Theorem 3.2]{EZ}. Let $r>\ell$ and let $F'_{1},\ldots,F'_{r}$ be a leaf order on the facets of $\Delta(G)$. Obviously, we may choose a leaf order on $\Delta (G)$ such that $F'_{r}=F_{\ell}$. With the same arguments and notation as in the proof of Theorem~\ref{2.2}, we get the sequence $(\ref{1})$.

We now observe that $G'$ is a $\Cc_{\ell-1}$-graph, hence, by the inductive hypothesis,
\begin{equation}\label{2}
\reg\frac{S}{J_{G'}}=\reg\frac{S}{\ini_{<}(J_{G'})}=\ell-1.
\end{equation}

The graph $G''$ has at most two non-trivial connected components. One of them, say $H_{1}$, is a $\Cc_{\ell'}$-graph with $\ell'\in\{\ell-2,\ell-1\}$. The other possible non-trivial component, say $H_{2}$, occurs if $|F_{\ell}|\geq 3$ and, in this case, $H_{2}$ is a clique of dimension $|F_{\ell}|-2\geq 1$. By the inductive hypothesis, we obtain
\begin{equation}\label{3}
\reg\frac{S}{J_{G''}}=\reg\frac{S}{\ini_{<}(J_{G''})}=\reg\frac{S}{J_{H_{1}}}+\reg\frac{S}{J_{H_{2}}}\leq \ell-1+1=\ell.
\end{equation}
Relations $(\ref{2})$ and $(\ref{3})$ yield $\reg (S/\ini_{<}(J_{G'})\oplus S/((x_{i},y_{i})+\ini_{<}(J_{G''})))\leq \ell$. From the exact sequence $(\ref{1})$ we get
\begin{equation}\label{4}
\reg\left(\frac{S}{\ini_{<}(J_{G})}\right)\leq \max\{\reg\left(\frac{S}{\ini_{<}(J_{G'})}\oplus\frac{S}{(x_{i},y_{i})+\ini_{<}(J_{G''})}\right), \reg\frac{S}{\ini_{<}(J_{G'})}+1\}\leq \ell.
\end{equation}

By \cite[Theorem 3.3.4]{HH10}, we know that  $\reg (S/J_{G})\leq \reg(S/\ini_{<}(J_{G})$, and by \cite[Theorem 1.1]{MM}, we have  $\reg(S/J_{G})\geq \ell$. By using all these inequalities, we get the desired conclusion.
\end{proof}

\section{Binomial edge ideals of caterpillar trees}
\label{tree}


Matsuda and Murai showed in \cite{MM} that, for any connected graph $G$ on the vertex set $[n]$, we have $\ell \leq \reg(S/J_{G}) \leq n-1$, where $\ell$ denotes the length of the longest induced path of $G$, and conjectured that $\reg (S/J_{G})=n-1$ if and only if $T$ is a line graph. Several recent papers are concerned with this conjecture; see, for example, \cite{EZ}, \cite{Sara}, and \cite{Sara1}. One may ask as well to characterize connected graphs $G$ whose longest induced path has length $\ell$ and $\reg(S/J_{G})=\ell$. In this section, we answer this question for trees.

A caterpillar tree is a tree $T$ with the property that it contains a path $P$ such that any vertex of $T$ is either a vertex of $P$ or it is adjacent to a vertex of $P$. Clearly, any caterpillar tree is a $\Cc_{\ell}$-graph for some positive integer $\ell$.
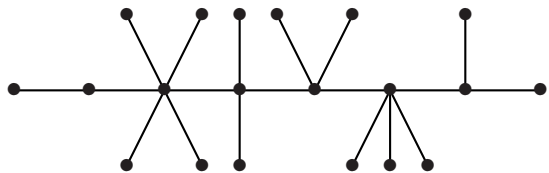
\begin{figure}[hbt]
\begin{center}
\psset{unit=1cm}
\begin{pspicture}(1,-2)(6.25,2)

\psline(0,0)(1,0)
\psline(1,0)(2,0)
\psline(2,0)(3,0)
\psline(3,0)(4,0)
\psline(4,0)(5,0)
\psline(6,0)(5,0)
\psline(6,0)(7,0)
\psline(2,0)(1.5,-1)
\psline(2,0)(1.5,1)
\psline(2,0)(2.5,-1)
\psline(2,0)(2.5,1)
\psline(3,0)(3,1)
\psline(3,0)(3,-1)
\psline(3.5,1)(4,0)
\psline(4,0)(4.5,1)
\psline(5,0)(4.5,-1)
\psline(5,0)(5,-1)
\psline(5,0)(5.5,-1)
\psline(6,0)(6,1)
\rput(0,0){$\bullet$}
\rput(1,0){$\bullet$}
\rput(3,0){$\bullet$}
\rput(2,0){$\bullet$}
\rput(4,0){$\bullet$}
\rput(5,0){$\bullet$}
\rput(6,0){$\bullet$}
\rput(7,0){$\bullet$}
\rput(1.5,1){$\bullet$}
\rput(2.5,1){$\bullet$}
\rput(1.5,-1){$\bullet$}
\rput(2.5,-1){$\bullet$}
\rput(3,-1){$\bullet$}
\rput(3,1){$\bullet$}
\rput(3.5,1){$\bullet$}
\rput(4.5,1){$\bullet$}
\rput(4.5,-1){$\bullet$}
\rput(5.5,-1){$\bullet$}
\rput(6,1){$\bullet$}
\rput(5,-1){$\bullet$}

\end{pspicture}
\end{center}
\caption{Caterpillar}
\label{example2}
\end{figure}

\begin{figure}[hbt]
\begin{center}
\psset{unit=1cm}
\begin{pspicture}(1,-2)(6.25,2)

\psline(0,0)(1,0)
\psline(1,0)(2,0)
\psline(2,0)(3,0)
\psline(3,0)(4,0)
\psline(4,0)(5,0)
\psline(6,0)(5,0)
\psline(6,0)(7,0)
\psline(4,0)(4.5,-1)
\psline(4.5,-1)(5,-2)
\rput(0,0){$\bullet$}
\rput(1,0){$\bullet$}
\rput(3,0){$\bullet$}
\rput(2,0){$\bullet$}
\rput(4,0){$\bullet$}
\rput(5,0){$\bullet$}
\rput(6,0){$\bullet$}
\rput(7,0){$\bullet$}
\rput(4.5,-1){$\bullet$}
\rput(5,-2){$\bullet$}

\end{pspicture}
\end{center}
\caption{Induced graph H}
\label{example3}
\end{figure}
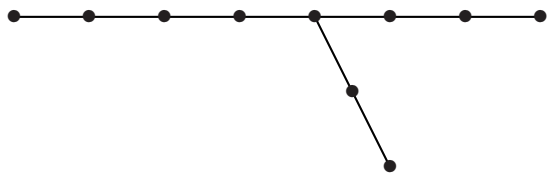
Caterpillar trees were first studied by Harary and Schwenk \cite{HF}. These graphs have applications in chemistry and physics \cite{EB}. In Figure~\ref{example2}, an example of caterpillar tree is displayed. Note that any caterpillar tree is a narrow graph in the sense of Cox and Erskine \cite{Cox}. Conversely, one may easily see that any narrow tree is a caterpillar tree. Moreover, as it was observed in \cite{M}, a tree is a caterpillar graph if and only if it is weakly closed in the sense of definition given in \cite{M}.

In the next theorem we characterize the trees $T$ with $\reg(S/J_{T})=\ell$ where $\ell$ is the length of the longest induced path of $T$.

\begin{Theorem}\label{3.1}
Let $T$ be a tree on the vertex set $[n]$ whose longest induced path $P$ has length $\ell$. Then $\reg(S/J_{T})=\ell$ if and only if $T$ is caterpillar.
\end{Theorem}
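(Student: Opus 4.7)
\emph{Sufficiency.} If $T$ is a caterpillar, then $T$ is a $\Cc_\ell$-graph (as remarked just before Theorem~\ref{clgraph}), and Theorem~\ref{clgraph} gives $\reg(S/J_T)=\ell$. For the converse I argue contrapositively: assuming $T$ is not a caterpillar, I produce an induced subtree $H\subseteq T$ satisfying $\reg(S/J_H)\geq\ell+1$, so that the monotonicity of regularity of binomial edge ideals under induced subgraphs (\cite[Corollary~2.2]{MM}) yields $\reg(S/J_T)\geq\reg(S/J_H)\geq\ell+1>\ell$.

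\emph{Constructing $H$.} Since $T$ is not a caterpillar, some vertex $u\notin V(P)$ is not adjacent to $V(P)$. Let $Q: v=q_0,q_1,q_2,\ldots,q_m=u$ be the unique path in $T$ from $V(P)$ to $u$, so $m\geq 2$, and let $H$ be the subtree of $T$ induced on $V(P)\cup\{q_1,q_2\}$; this is the tree of Figure~\ref{example3}, a path $P$ with a pendant path $v,q_1,q_2$ attached at $v$. Because $P$ is a longest induced path in $T$ and every path of a tree is induced, the vertex $v=p_i$ must be at distance at least $2$ from each endpoint of $P$: otherwise one could append $q_1,q_2$ to the nearer endpoint of $P$ and obtain a strictly longer induced path of $T$. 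Thus $2\leq i\leq \ell-2$, and $H$ is the spider $S_{i,\ell-i,2}$ whose three arms all have length $\geq 2$ and whose longest induced path still has length exactly $\ell$.

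\emph{The bound $\reg(S/J_H)\geq\ell+1$.} I plan to prove this by induction on $\ell\geq 4$, using the primary-decomposition/exact-sequence machinery developed in the proofs of Theorem~\ref{2.2} and Theorem~\ref{clgraph}. The base case $\ell=4$, corresponding to $H=S_{2,2,2}$, is handled by a direct (e.g., Macaulay2) Betti-number computation giving $\reg=5$. For the inductive step, I cut at the branching vertex $v$ to write $J_H=J_{H'}\cap((x_v,y_v)+J_{H''})$, where $H''=H\setminus v$ is the disjoint union of three paths of lengths $i-1,\,\ell-i-1,\,1$ (contributing regularity exactly $(i-1)+(\ell-i-1)+1=\ell-1$), and $H'$ is the block graph obtained by completing the three cliques at $v$ to a $K_4$. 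The exact sequence (\ref{1}) of initial ideals, combined with the standard fact that in a short exact sequence $0\to A\to B\to C\to 0$ one has $\reg A\geq\reg B$ whenever $\reg C<\reg B$, together with an inductive analysis of $\reg(S/\ini_<(J_{H'}))$, should produce $\reg(S/J_H)\geq\ell+1$. The main difficulty lies precisely in this inductive step: unlike the \emph{upper-bound} argument of Theorem~\ref{clgraph}, here one must extract a \emph{strict lower} bound from the exact sequence, which requires careful bookkeeping of the extremal local cohomology class and a separate understanding of the auxiliary block graph $H'$, which is not itself a $\Cc_{\ell-1}$-graph. The base case $S_{2,2,2}$, though small, is essential and anchors the entire induction.
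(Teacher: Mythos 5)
Your overall architecture coincides with the paper's: sufficiency via Theorem~\ref{clgraph}, and for necessity the same induced subgraph $H$ (the path $P$ with a pendant path of length $2$ attached at an internal vertex, as in Figure~\ref{example3}) combined with the monotonicity $\reg(S/J_H)\leq\reg(S/J_T)$ from \cite[Corollary 2.2]{MM}. Your justification that the attachment vertex lies at distance at least $2$ from both endpoints of $P$ is correct and in fact more careful than what the paper writes. The divergence is in the one step that carries all the weight: showing $\reg(S/J_H)\geq\ell+1$. The paper does not prove this; it quotes \cite[Theorem 27]{SZ}, where the Betti numbers (hence the regularity, equal to $\ell+1$) of exactly this class of trees are computed. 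You instead propose an induction that you explicitly leave unexecuted (``should produce''), so as it stands your argument is incomplete precisely where the paper invokes an external result.

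Beyond being unfinished, the sketch has a directional flaw. The exact sequence (\ref{1}) is a sequence of \emph{initial} ideals, so the ``$\reg A\geq\reg B$ when $\reg C<\reg B$'' device (which is itself correct, following from $\reg B\leq\max\{\reg A,\reg C\}$) would at best give you a lower bound on $\reg(S/\ini_<(J_H))$. Since the comparison between an ideal and its initial ideal runs $\reg(S/J_H)\leq\reg(S/\ini_<(J_H))$, a lower bound on the right-hand side says nothing about $\reg(S/J_H)$, which is what \cite[Corollary 2.2]{MM} requires. To salvage the plan you would have to run the Mayer--Vietoris type sequence directly on $J_H=J_1\cap J_2$ (which does exist, since the intersection decomposition from \cite{EHH} is at the level of the ideals themselves), and then still verify that the middle term has regularity exactly $\ell+1$ while the cokernel term has strictly smaller regularity --- none of which is routine, and which is why the cleanest fix is simply to cite \cite[Theorem 27]{SZ} as the paper does, or to supply a self-contained computation for the tripod trees as a separate lemma.
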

\begin{proof}
Let $T$ be a caterpillar tree whose longest induced path has length $\ell$. Then, by the definition of a caterpillar tree, it follows that $T$ is a $\Cc_{\ell}$-graph. Hence, $\reg(S/J_{T})=\ell$ by Theorem~\ref{clgraph}. Conversely, let $\reg(S/J_{T})=\ell$ and assume that $T$ is not caterpillar. Then $T$ contains an induced subgraph $H$ with $\ell+3$ vertices as in Figure~\ref{example3}.

Then, by \cite[Theorem 27]{SZ}, it follows that $\reg(S/J_{H})=\ell+1$. Thus, since $\reg(S/J_{H})\leq \reg(S/J_{G})$  (see \cite[Corollary 2.2]{MM}), it follows that $\reg (S/J_{G})\geq \ell+1$, contradiction to our hypothesis.
\end{proof}

{}

\end{document}